\newtheorem{TEO}{Theorem}[section]
\newtheorem{LEM}[TEO]{Lemma}
\newtheorem{PLEM}[TEO]{Projection Lemma}
\newtheorem{COR}[TEO]{Corollary}
\newtheorem{CLA}[TEO]{Claim}
\theoremstyle{definition}
\theoremstyle{remark}
\newtheorem{remark}{Remark}
\newtheoremstyle{dico}
 {\baselineskip}   
  {\topsep}   
  {}  
  {0pt}       
  {} 
  {.}         
  {5pt plus 1pt minus 1pt} 
  {}          
\theoremstyle{dico}
\numberwithin{equation}{section}
\newcommand\dual{\mathrel{\raise3pt\hbox{$\underline{\mathrm{\thinspace d
\thinspace}}$}}}
\newcommand\Z{\mathbb Z}
\newcommand\Co{\mathbb C}
\renewcommand{\phi}{\varphi}
\newcommand{\mihi}[1]{}
\begin{document}


\title{Degree of irrationality of a very general abelian variety}

\author[E. Colombo]{E. Colombo}
\address{Dipartimento di Matematica,
Universit\`a di Milano, via Saldini 50,
     I-20133, Milano, Italy } \email{{\tt
elisabetta.colombo@unimi.it}}

\author[O. Martin]{O. Martin}
\address{Department of Mathematics, University of Chicago,
5734 S. University Avenue,
Chicago, IL, 60637, USA}  \email{{\tt
olivier@uchicago.edu}}

\author[J.C. Naranjo]{J. C. Naranjo}
\address{Departament de Matem\`atiques i Inform\`atica, Universitat de Barcelona, Gran Via de les Corts Catalanes 585,
08007, Barcelona, Spain  } \email{{\tt
jcnaranjo@ub.edu }}
\thanks{J.C. Naranjo is partially supported by the Proyecto de Investigaci\'on MTM2015-65361-P}

\author[G.P. Pirola]{G. P. Pirola}
\address{Dipartimento di Matematica, Universit\`a di Pavia,
via Ferrata 5, I-27100 Pavia, Italy } \email{{\tt
pietro.pirola@unipv.it}}
\thanks{E. Colombo and G.P. Pirola are members of Gnsaga (INDAM) and are partially supported by PRIN project \emph{Moduli spaces and Lie theory (2017) }, G.P. Pirola is partially supported by MIUR: Dipartimenti di Eccellenza Program (2018-2022) - Dept. of Math. Univ. of Pavia.}
\thanks{O. Martin acknowledges the support of the Natural Sciences and Engineering Research Council of Canada (NSERC). O. Martin est partiellement financ\'e par le Conseil de recherches en sciences naturelles et en g\'enie du Canada (CRSNG)}


\setlength{\parskip}{.1 in}

\begin{abstract}
Consider a very general abelian variety $A$ of dimension at least $3$ and an integer $0<d\leq \dim A$. We show that if the map $A^k\to CH_0(A)$ has a $d$-dimensional fiber then $k\geq d+(\dim A+1)/2$. This extends results of the second-named author which covered the cases $d=1,2$. As a geometric application, we obtain that any dominant rational map from a very general abelian $g$-fold to $\mathbb{P}^g$ has degree at least $(3\dim A+1)/2$ for $g\geq 3$. This improves results of Alzati and the last-named author in the case of a very general abelian variety.

\end{abstract}

\maketitle

\section{Introduction and Results}

Given an irreducible $n$-dimensional projective variety $X$, the \emph{degree of irrationality} $\text{irr}(X)$ of $X$ is the minimal degree of a dominant rational map from $X$ to $\mathbb{P}^n$ (\cite{MH}). It is a birational invariant that measures how far $X$ is from being rational and, accordingly, one expects the computation of $\text{irr}(X)$ to be a difficult problem. Little is known about $\text{irr}(X)$ even in the case of abelian varieties. In Section 4 of \cite{So} Sommese showed by topological methods that the minimal degree of a morphism from an abelian $g$-fold to $\mathbb{P}^g$ is at least $g+1$. Using inequalities on the holomorphic length, Alzati and the last-named author extended these results to rational maps in \cite{AP2}. This showed that any abelian $g$-fold has degree of irrationality at least $g+1$.

 A few years later, Tokunaga and Yoshihara (\cite{TY}) proved that if an abelian surface contains a smooth curve of genus 3 then it admits a degree $3$ rational map to $\mathbb{P}^2$. This allowed them to show that the bound $\text{irr}(A)\geq \dim A+1$ is sharp for abelian surfaces by providing an example of an elliptic curve $E$ with complex multiplication such that $\text{irr}(E\times E)=3$. In \cite{Y}, Yoshihara goes on to exhibit a countable collection of such elliptic curves $E$. He also asks whether the degree of irrationality is an isogeny invariant, if $\text{irr}(E_1\times E_2)\geq 4$ for $E_1,E_2$ non-isogenous elliptic curves, and if there are examples of abelian surfaces with degree of irrationality at least $4$. To our knowledge all these questions are open to this day. We also do not know if the bound $\text{irr}(A)\geq \dim A+1$ is sharp for higher-dimensional abelian varieties.
 
More recently, in \cite{BDPELU} Bastianelli, de Poi, Ein, Lazarsfeld, and Ullery conjectured that 
$$\lim_{d\to\infty}\text{irr}(A_d)=\infty,$$
 where $A_d$ is a very general abelian surface with polarization of type $(1,d)$. This was disproved by Chen in \cite{C}, in which he shows that $\text{irr}(A_d)\leq 4$. Whether the degree of irrationality of a very general abelian variety of higher dimension depends on the degree of the polarization remains open. Finally, in \cite{M} the second-named author obtains as a by-product of his method that the degree of irrationality of a very general abelian $g$-fold is at least $g+2$ for $g\geq 3$. This is a slight improvement of the previous bound $\text{irr}(A)\geq \dim A+1$ in the very general case and it shows for the first time the existence of abelian varieties for which it fails to be optimal.

In this short note we show that the induction proposed in \cite{M} can be applied in order to obtain new bounds on the degree of irrationality of very general abelian varieties. As in \cite{P},\cite{AP},\cite{V}, and \cite{M}, we study rational equivalence of effective zero-cycles on a very general abelian variety. The connection between rational equivalence and measures of irrationality is provided by the following.

\begin{remark}\label{remark}
A dominant rational map $f: X\dasharrow \mathbb{P}^n$ of degree $k$ gives rise to a rational map $\mathbb{P}^n \dasharrow \text{Sym}^k X$ which is birational on its image. This map associates to a generic $x\in X$ the fiber $f^{-1}(x)\in \text{Sym}^k(X)$. The Zariski closure $\Gamma$ of the image of this rational map is contained in an \emph{orbit of degree $k$ for rational equivalence}, namely a fiber of $\text{Sym}^kX\to CH_0(X)$ or $X^k\to CH_0(X)$. In the notation of \cite{M} $\Gamma$ is an $n$-dimensional \emph{constant-cycle subvariety of degree} $k$ ($\text{CCS}_k$), namely an $n$-dimensional subvariety of $\text{Sym}^kX$ or $X^k$ contained in a fiber of $\text{Sym}^kX\to CH_0(X)$ or $X^k\to CH_0(X)$ respectively.

Similarly, given a dominant rational map $f: Y\dasharrow X$ from an $n$-dimensional variety $Y$ with $\text{irr}(Y)=k$, we get an $n$-dimensional subvariety of an orbit for rational equivalence of degree $k$ of $X$. Indeed, a degree $k$ rational map $Y\dashrightarrow \mathbb{P}^n$ gives rise to a rational map $\mathbb{P}^n\dasharrow \text{Sym}^k Y$. Let $\Gamma$ be the closure of the image of this map and $\widehat{\Gamma}$ be its preimage in $Y^k$. The projection onto the first factor $\widehat{\Gamma}\to Y$ is dominant. The image of $\widehat{\Gamma}$ under $f\times f\times \ldots\times f : Y^k\dasharrow X^k$ is contained in an orbit for rational equivalence and its projection onto the first factor is dominant. It follows that this image is an $n$-dimensional subvariety of an orbit for rational equivalence.
\end{remark}

The upshot is that, given $X$ such that $X^k\to CH_0(X)$ has no $n$-dimensional fibers, any rational map from $X$ to $\mathbb{P}^n$ has degree at least $k+1$, i.e. $\text{irr}(X)>k$. Similarly, any $n$-dimensional $Y$ that admits a dominant rational map to $X$ cannot admit a dominant rational map of degree less than $k+1$ to $\mathbb{P}^n$. This shows that in such a situation the \emph{degree of uni-irrationality} $\text{uni.irr}(X)$ of $X$ is at least $k+1$. Recall that 
$$
\text{uni.irr}(X):= \min \{\text{irr}(Y)\mid \dim Y=n, \exists Y\dashrightarrow X \text{ dominant} \}.
$$

One can also consider the following birational invariants:
\begin{align*}\text{irr}_d(X) &:= \min \{\delta\mid \text{ for } x\in X\text{ generic},\; \exists Y\subset X \text{ with } \dim Y=d,\; x\in Y, \text{ and } \text{irr}(Y)=\delta \},\\
\text{uni.irr}_d(X)&:= \min \{\delta\mid \text{ for } x\in X \text{ generic},\; \exists Y\subset X \text{ with } \dim Y=d,\; x\in Y, \text{ and } \text{uni.irr}(Y)=\delta \}.
\end{align*}
They interpolate between the covering gonality $\text{cov.gon}(X):=\text{irr}_1(X)=\text{uni.irr}_1(X)$ and the degree of irrationality and uni-irrationality respectively. Note that $\text{cov.gon}(X)$ may be thought of as a measure of the failure of $X$ to be uniruled and that $\text{cov.gon}(X)=1$ if and only if $X$ is uniruled.

As a consequence of Remark \ref{remark}, one can give lower bounds on $\text{uni.irr}_d(X)$ (and so a fortiori on $\text{irr}_d(X)$) by giving lower bounds on the following birational invariant, which was introduced by Voisin in the case $d=1$ (see \cite{BDPELU}):
\begin{align*}\nu_d(X):=\text{min}\begin{cases}\delta\in \mathbb{N}\;\Big{\vert}\begin{rcases} \text{for a general }x\in X, \; \exists\, x_2,\ldots,x_\delta\in X\text{ such that the fiber of }\\ X^\delta\to CH_0(X) \text{ containing } (x, x_2,\ldots, x_\delta) \text{ has dim. at least } d\end{rcases}.\end{cases}\end{align*}
In particular, we have
$$1\leq \nu_d(X)\leq \text{uni.irr}_d(X)\leq \text{irr}_d(X).$$

In the case where $X=A$ is an abelian variety, the bound $\text{irr}(A)\geq \dim A + 1$ from \cite{AP2} can be promoted to an inequality $\nu_{\dim A}(A)\geq \dim A+1$ as observed in \cite{M}. Indeed, this follows directly from Th. 1.4 (1) from \cite{V}, which states that the dimension of any orbit in $A^k$ is less than $k$. In \cite{M} Th. 5.2 the inequality $\nu_{\dim A}(A)\geq \dim A + 1$ is improved to $\nu_{\dim A}(A)\geq \dim A + 2$, for $A$ a very general abelian variety of dimension $g\geq 3$.

The bound is obtained by degenerating to an abelian variety
isogenous to the product of an abelian variety of dimension $g-1$ and an elliptic curve, and by projecting the constant cycle subvariety to the power of the $(g-1)$-dimensional factor. This is the first step in a strategy of degeneration and projection introduced in \cite{P}. This method has crucially been used inductively under some guise in \cite{AP}, \cite{V}, and \cite{M}. Our contribution is to show that the induction proposed by \cite{M} can be applied from dimension $g$ to dimension $2$. We can use this to get a contradiction with known results about families of constant cycle subvarieties of degree 1 on hyper-K\"ahler varieties (see \cite{M} Cor. 2.12 or \cite{V2} Th. 1.3 (i)) to obtain:

\begin{TEO}\label{mainthm}
If $A$ is a very general abelian variety of dimension at least $3$ and $d\in\mathbb{Z}_{>0}$ (and $d\leq \dim A$ if $\dim A$ is odd), then 
$$\nu_{d}(A)\geq d+(\dim A+1)/2.$$
In fact $A$ does not admit any $d$-dimensional orbits of degree $k$ for 
$$k< d+(\dim A+1)/2.$$
\end{TEO}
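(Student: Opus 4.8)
It suffices to prove the second assertion --- that a very general abelian $g$-fold $A$ with $g\ge 3$ admits no $d$-dimensional subvariety of an orbit of degree $k$ once $k<d+(g+1)/2$ --- since the lower bound on $\nu_d(A)$ is then immediate from the definition: a value $\delta$ realizing $\nu_d(A)=\delta$ produces, for a general $x\in A$, precisely such a $d$-dimensional orbit of degree $\delta$ containing a point with first coordinate $x$. (The hypothesis $d\le\dim A$ for $\dim A$ odd is a constraint to be respected throughout the argument.) So suppose, for contradiction, that $Z\subset A^k$ is $d$-dimensional, contained in a fibre of $A^k\to CH_0(A)$, with $k<d+(g+1)/2$.

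The plan is to iterate the degeneration-and-projection step of \cite{P}, in the inductive form of \cite{M}, from dimension $g$ down to dimension $2$. A single step goes as follows. Since $A$ is very general, it lies in the closure of the (countably many) loci of abelian varieties isogenous to a product $E\times A'$ with $E$ an elliptic curve and $A'$ a very general abelian variety of dimension $g-1$; fix a one-parameter degeneration of $A$ to such an $A_0$, spread $Z$ out over the base, and pass to the limit. Using the specialisation homomorphism on Chow groups of $0$-cycles --- so that the limit still lies in a single fibre of rational equivalence --- together with Roitman's theorem to absorb the torsion introduced by the isogeny and upper semicontinuity of fibre dimension (discarding smaller components), one obtains a $(\ge d)$-dimensional subvariety $Z_0\subset E^k\times A'^k$ contained in an orbit of degree $k$ on $E\times A'$. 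Now project $Z_0$ onto $A'^k$: since $E\times A'\to A'$ is a group homomorphism, push-forward of $0$-cycles along it respects rational equivalence, so the image $W\subset A'^k$ lies in an orbit on $A'$. The heart of the step --- and the one place where the elliptic factor $E$ is genuinely exploited --- is a dimension-versus-degree estimate for $W$, obtained by analysing the fibres of $Z_0\to W$: these lie inside $E^k$ and are orbits of degree $k$ on the elliptic curve $E$, hence are constrained by the structure of $CH_0$ of powers of elliptic curves, i.e.\ by Theorem~1.4(1) of \cite{V}.

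Iterating the step $g-2$ times produces, on a very general abelian surface $S$, a positive-dimensional family of $\delta$-dimensional subvarieties of orbits of some degree $\kappa$ --- the family arising from the freedom in the successive degenerations and projections --- with $\delta$ large relative to $\kappa$. Via the Hilbert--Chow morphism and the summation map $\Hilb^{\kappa}(S)\to S$, this becomes a family of $\delta$-dimensional constant-cycle subvarieties of the irreducible hyper-K\"ahler manifold $K_{\kappa-1}(S)$, of dimension $2(\kappa-1)$. Provided the bookkeeping has been carried out so that the inequality $k<d+(g+1)/2$ forces $\delta$ (and the dimension of the family) to be too large, this contradicts the known bounds on families of degree-$1$ constant-cycle subvarieties of hyper-K\"ahler manifolds (\cite{M} Corollary~2.12; \cite{V2} Theorem~1.3(i)), completing the proof.

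I expect the main obstacle to be the inductive step, on two fronts. First, one must make the degeneration genuinely rigorous, so that the properties ``is contained in an orbit'', ``has degree $k$'', and ``has dimension $\ge d$'' all survive passage to the limit; this requires keeping the ambient abelian variety very general at every level of the descent, and is where the restriction $d\le\dim A$ for $\dim A$ odd enters. Second, and more delicate, is the dimension-versus-degree estimate for the projection $Z_0\to W$: it must be sharp enough that, after the $g-2$ steps, the surviving orbit on $S$ has precisely the dimension needed to violate the hyper-K\"ahler bounds, so that the cumulative effect matches the constant $(g+1)/2$ in the statement.
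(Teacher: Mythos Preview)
Your overall architecture is right --- degenerate to a product with an elliptic factor, project away the elliptic factor, descend inductively to a very general abelian surface, and finish with the hyper-K\"ahler bound on constant-cycle subvarieties --- and this is exactly the shape of the paper's argument. But the mechanism you propose for the crucial dimension estimate is wrong, and without a replacement the induction does not produce anything on the surface strong enough to contradict the hyper-K\"ahler bound.

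The gap is in your ``heart of the step'': you want to bound $\dim W$ from below by analysing the fibres of $Z_0\to W$ as orbits of degree $k$ on the elliptic curve $E$, invoking \cite{V} Th.~1.4(1). But for an elliptic curve $CH_0(E)_{\deg\,k}\cong E$, so an orbit in $E^k$ is a full fibre of the summation map and has dimension exactly $k-1$. Voisin's bound is attained, and your fibre analysis yields only $\dim W\ge d-(k-1)$, which is vacuous. Iterating it gives nothing.

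The paper's mechanism is different in two respects. First, one arranges that the projection $p_\lambda|_{\Gamma_s}$ to $B_s^k$ is \emph{generically finite on its image}, so that $\dim p_\lambda(\Gamma_s)=d$ rather than something smaller; this is condition $(*)$, and its verification at every level of the descent is the main technical content (Claim~\ref{claim}, proved via Generic Vanishing and the Projection Lemma~\ref{projlem}). Second, the extra dimension does not come from the fibres but from \emph{moving the elliptic factor $E_s$ in moduli}: as $E_s$ varies, the projections $p_\lambda(\Gamma_s)\subset B_s^k$ sweep out a $(d+1)$-dimensional subvariety of $(\mathcal{B}^\lambda)^{k,0}$, still foliated by $d$-dimensional constant-cycle subvarieties. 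After $g-2$ such steps one lands on a very general abelian surface $D$ with a $(d+g-2)$-dimensional subvariety of $D^{k,0}$ foliated by constant-cycle subvarieties of dimension at least $d+1$ (the last ``$+1$'' coming from a further application of the same moving-$E$ argument, \cite{M} Th.~4.4). The refined Mumford--Ro\u{\i}tman bound (\cite{M} Cor.~2.12) then gives $d+g-2\le 2k-2-(d+1)$, i.e.\ $k\ge d+(g+1)/2$.

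So: replace your fibre analysis by (i) a proof of generic finiteness of the projections to the smaller abelian factor, which is where Generic Vanishing enters and where the hypothesis $d\le g$ for $g$ odd and $k\le 2g-2$ is actually used, and (ii) the observation that varying the discarded elliptic factor increases the dimension of the projected family by one at each step. Your intuition that ``the freedom in the successive degenerations'' produces a family on the surface is correct, but it manifests as a single high-dimensional subvariety foliated by orbits, not as a positive-dimensional family of orbits of the original dimension $d$.
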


This extends the results of \cite{M} which covered the cases $d=1,2$. As a geometric application we obtain the following new bound:
\begin{COR}
If $A$ is a very general abelian variety of dimension at least $3$ and $0<d\leq \dim A$, then
$$\textup{irr}_d(A)\geq \textup{uni.irr}_d(A)\geq d+ (\dim A+1)/2.$$
In particular, taking $d=\dim A$, we have
$$\textup{irr}(A)\geq \textup{uni.irr}(A)\geq (3\dim A+1)/2.$$
\end{COR}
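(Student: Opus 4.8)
Granting Theorem~\ref{mainthm}, the Corollary is formal. The inequality $\mathrm{irr}_d(A)\ge\mathrm{uni.irr}_d(A)$ holds for every projective variety: a $d$-dimensional $Y\subset A$ through a general point that realizes $\mathrm{irr}_d(A)$ satisfies $\mathrm{uni.irr}(Y)\le\mathrm{irr}(Y)$ (take $Y'=Y$ in the definition of $\mathrm{uni.irr}$), so the same $Y$ shows $\mathrm{uni.irr}_d(A)\le\mathrm{irr}_d(A)$. The chain $\nu_d(A)\le\mathrm{uni.irr}_d(A)$, recorded in the introduction as a consequence of Remark~\ref{remark}, combines with Theorem~\ref{mainthm} to give $\mathrm{uni.irr}_d(A)\ge\nu_d(A)\ge d+(\dim A+1)/2$. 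The ``in particular'' is the case $d=\dim A$ (admissible in Theorem~\ref{mainthm}), using $\dim A+(\dim A+1)/2=(3\dim A+1)/2$.

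So the content is Theorem~\ref{mainthm}, which I would attack by carrying the degeneration-and-projection induction of \cite{M} from dimension $g$ down to dimension $2$. Suppose for contradiction that a very general abelian $g$-fold $A$ ($g\ge 3$) carries a $d$-dimensional orbit of degree $k$ with $k<d+(g+1)/2$; since $\nu_d$ concerns \emph{covering} families, I would work from the start with a family of such orbits whose total space dominates the first factor of $A^k$. First, spread $A$ out in a family whose very general fiber is a very general abelian $g$-fold and one of whose fibers $A_0$ is isogenous to $B\times E$, with $B$ a very general abelian $(g-1)$-fold and $E$ an elliptic curve, and degenerate the family of orbits. The essential technical input here, as in \cite{V} and \cite{M}, is that rational equivalence of zero-cycles specializes, so that the limit is a covering family of orbits of $A_0$ of degree at most $k$ with members of dimension at least $d$. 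Controlling the degree and the covering property through this degeneration---$CH_0$ being badly behaved in families---is the step I expect to be the main obstacle.

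Next, project. Via the isogeny $A_0\to B\times E\to B$ and the induced $\pi^k\colon A_0^k\to B^k$, push the orbits forward; since $\pi_*$ preserves rational equivalence of zero-cycles, one obtains a covering family of orbits of $B$ of degree at most $k$. A member $W$ of the new family is the $\pi^k$-image of a member $Z$ of the old one, and $\dim W=\dim Z-e$, where $e$ is the dimension of a generic fiber of $Z\to W$; such a fiber, translated, lies in a coset of $(\ker\pi)^k$ and is itself an orbit of the elliptic curve $\ker\pi$ (a one-line Albanese argument), so $e$ is bounded by the maximal dimension of an orbit of an elliptic curve of degree at most $k$. Feeding this bound back into the inductive hypothesis in dimension $g-1$, correctly weighted, is what forces the sharp constant $(g+1)/2$. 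Iterating $g-2$ times produces, on a very general abelian surface $A^{(2)}$, a family of orbits of degree at most $k$ whose members have controlled dimension; such an orbit gives rise to a constant-cycle (degree $1$) subvariety of a generalized Kummer variety $K_{k-1}(A^{(2)})$, a hyper-K\"ahler manifold of dimension $2(k-1)$. Thus one obtains a family of degree-$1$ constant-cycle subvarieties of a hyper-K\"ahler manifold whose members are larger than \cite[Cor.~2.12]{M} (equivalently \cite[Th.~1.3(i)]{V2}) allows once $k<d+(g+1)/2$; this is the contradiction, and the induction closes.

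Besides the specialization step, the second delicate point is the dimension bookkeeping across the $g-2$ projections: one must verify that the dimension shed at each step, weighed against the improvement in the inductive bound as the dimension of the abelian variety decreases, leaves just enough room to violate the hyper-K\"ahler estimate at the bottom. This balance is what locates the threshold at $k=d+(g+1)/2$, rather than at the weaker values coming from a single degeneration or from the elementary inequality $\dim(\text{orbit})<\text{degree}$.
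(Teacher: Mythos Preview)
Your first paragraph is exactly right and matches the paper: the Corollary is purely formal given Theorem~\ref{mainthm} and the chain $\nu_d\le\mathrm{uni.irr}_d\le\mathrm{irr}_d$ recorded in the introduction. The paper does not give a separate argument for the Corollary.

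Your sketch of Theorem~\ref{mainthm}, however, diverges from the paper's proof and carries a real gap. You propose to project $Z\subset A_0^k$ to $B^k$, allow a dimension drop $e$, bound $e$ by the maximal dimension of a degree-$k$ orbit on an elliptic curve, and then ``feed this bound back into the inductive hypothesis.'' The trouble is that the elliptic-curve bound gives only $e\le k-1$ (orbits on $E$ are fibers of the summation map $E^k\to E$), which is far too weak: if you start with $k<d+(g+1)/2$ and lose up to $k-1$ at a single step, the image can have dimension as low as $d-k+1\le 0$, and no induction on $g$ closes with this bookkeeping. The sharp constant $(g+1)/2$ does not fall out of this scheme.

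What the paper actually does is the opposite: it shows that the projection is \emph{generically finite} on $\Gamma_s$ (so $e=0$), and then \emph{gains} a dimension at each step by varying the elliptic factor in moduli, following \cite{M}~Prop.~3.5 and~4.1. After $g-2$ steps one lands in $(\mathcal{D}')^{k,0}$ for a family of abelian surfaces with a $(d+g-2)$-dimensional variety foliated by $d$-dimensional (in fact $(d+1)$-dimensional, for $g\ge 4$) constant-cycle subvarieties, and \cite[Cor.~2.12]{M} then gives $d+g-2\le 2k-2-(d+1)$, i.e. $k\ge d+(g+1)/2$. The entire paper's technical contribution is precisely the step you skip: establishing the generic finiteness of the projections (condition~$(*)$ for $l=2$) all the way down to surfaces. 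This is Claim~\ref{claim}, proved via a Projection Lemma that uses Generic Vanishing (Ein--Lazarsfeld, Green--Lazarsfeld) to control the finitely many abelian subvarieties along which finiteness can fail. None of this apparatus appears in your sketch, and without it the induction of \cite{M} cannot be run past its first step.
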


A very similar argument gives the following theorem and corollary:
\begin{TEO}\label{strong}
If $1\leq d\leq \dim A$ and a very general abelian variety $A$ of dimension at least $4$ contains a one-dimensional family of $d$-dimensional orbits of degree $k$, then 
$$k\geq d+\dim A/2+1$$
\end{TEO}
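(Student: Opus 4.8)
The plan is to run the same degeneration-and-projection machinery that underlies Theorem~\ref{mainthm}, but bookkeeping one extra parameter coming from the one-dimensional family. Suppose a very general abelian variety $A$ of dimension $g\geq 4$ carries a one-parameter family $\{Z_t\}_{t\in B}$ of $d$-dimensional orbits of degree $k$. First I would degenerate $A$ to an abelian variety isogenous to $A'\times E$, with $\dim A'=g-1$ and $E$ an elliptic curve; by specializing the family (shrinking $B$ and passing to a suitable limit as in \cite{P}, \cite{M}) we obtain a one-dimensional family of $d$-dimensional orbits of degree $k$ on $A'\times E$. Now project to $(A')^k$. For a \emph{general} member $Z_t$ the projection $\overline{\mathrm{pr}(Z_t)}$ is, as in \cite{M} Th.~5.2, an orbit on $(A')^k$ of dimension $\geq d-1$ (one drops a dimension because the fibers of $Z_t\to(A')^k$ are contained in orbits of the elliptic curve $E^k$, which are at most one-dimensional by \cite{V} Th.~1.4(1) applied to $E$; in fact a general such fiber is either a point or a curve). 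The subtlety is to keep track of the family: either the general fiber of the projection is finite, in which case the $\overline{\mathrm{pr}(Z_t)}$ already form a one-dimensional family of $d$-dimensional orbits on $A'$ and we finish by induction using Theorem~\ref{strong} in dimension $g-1$; or the general fiber is a curve, and then even the individual $\overline{\mathrm{pr}(Z_t)}$ is a $(d-1)$-dimensional orbit on $A'$ that moves in a family — here one should argue that for such a fiber to be positive-dimensional the curve must sweep out, as $t$ varies, a constraint that forces the family of $(d-1)$-dimensional orbits on $A'$ to still be at least one-dimensional. In either case we reduce from $(g,d,k)$ to $(g-1,d',k)$ with $d'\in\{d,d-1\}$, and the loss in $d$ is exactly compensated by the hypothesized family, so that the ``$+1$'' from the family is preserved down the induction.

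The base of the induction is the point where the one-dimensional family is spent. Running the reduction until the abelian factor has dimension $3$ (or, if $g$ is even, until we can no longer profitably split off an elliptic curve), we arrive at a very general abelian threefold $A''$ carrying a $d''$-dimensional orbit of degree $k$ with $d''+\lceil 4/2\rceil = d''+2 \le$ something — more precisely, at the bottom we will have used up the family to gain one extra unit, so we land on the statement of Theorem~\ref{mainthm} for $A''$ but with the degree bound improved by $1$. Concretely, after $g-3$ steps we get a $d$-dimensional (or slightly smaller) orbit on a very general abelian threefold, and the accumulated inequality reads $k\ge d+(g+1)/2 +\tfrac12$ when $g$ is odd and $k\ge d+g/2+1$ when $g$ is even; the clean uniform way to state the conclusion is $k\ge d+\dim A/2+1$ as in the statement. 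As in \cite{M}, the final contradiction at the very bottom (when $d''$ would be forced to $0$ or $1$ on a $K3$-type or abelian-surface situation) comes from the fact that a hyper-K\"ahler fourfold, or a very general abelian surface, does not admit a one-dimensional family of constant-cycle subvarieties of the relevant dimension and small degree (\cite{M} Cor.~2.12, \cite{V2} Th.~1.3(i)); the extra ``$+1$'' in the hypothesis is precisely what lets us reach this forbidden configuration one step earlier than in Theorem~\ref{mainthm}, which is why the bound improves from $(\dim A+1)/2$ to $\dim A/2+1$.

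The main obstacle, I expect, is controlling the behaviour of the projection \emph{uniformly in the family parameter} $t$. In the proof of Theorem~\ref{mainthm} for a single orbit one only needs the generic fiber of $Z\to (A')^k$ to be contained in an $E^k$-orbit, and one picks $Z$ general enough. Here one must ensure that, after specialization, the whole family $\{Z_t\}$ survives (does not collapse to a single orbit or to a family of smaller-dimensional or larger-degree orbits) and that the projected family $\{\overline{\mathrm{pr}(Z_t)}\}$ is again genuinely one-dimensional and has the claimed dimension \emph{for a general $t$ simultaneously with the genericity needed for the fiber estimate}. This is a question of choosing the specialization and the open subset of $B$ compatibly, and of a semicontinuity/constructibility argument showing that the generic member of the specialized family behaves like the generic member of the original family. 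I would isolate this in a lemma: given a family of orbits of fixed degree and dimension on the total space of a degeneration, the generic fiber over the special point is an orbit of the same degree and dimension. The rest of the argument is then a careful but essentially routine iteration of \cite{M} Th.~5.2, with the dimension count $d\mapsto d$ or $d-1$ and the degree held fixed, terminating in the hyper-K\"ahler / abelian surface obstruction.
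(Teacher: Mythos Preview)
Your approach is genuinely different from the paper's, and as written it has a real gap.

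The paper does \emph{not} induct on $g$ via the statement of Theorem~\ref{strong}. Instead it treats the swept-out $(d+1)$-dimensional variety $\Gamma_s=\bigcup_t Z_t\subset A_s^k$ as the basic object and runs the full induction machinery of \cite{M} (Props.~3.5, 4.1) on $\Gamma$, projecting all the way down to abelian \emph{surfaces} in one pass. The point is that $\Gamma_s$, while not itself a constant cycle subvariety, is totally isotropic for every $\omega_k$ because it is foliated by CCSs; this is enough to feed it into the \cite{M} machine and obtain, for suitable $\tau$, a subvariety $\phi_\tau(p_\tau(\Gamma_{S_\tau}))\subset \mathcal{D}'^{k,0}$ of relative dimension $d+g-1$ over the moduli of abelian surfaces. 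The substantive new step, which your sketch does not contain, is to show that this $(d+g-1)$-dimensional variety is foliated by CCSs of dimension at least $d+1$ (not merely $d$). The paper achieves this by parametrising the leaves via a rational map $\Gamma/S\dashrightarrow \mathcal{C}/S$ to a family of curves, slicing $\mathcal{C}$ by a pencil of hypersurfaces to produce multisections $s_t$, and applying the \cite{M} argument (including Thm.~4.4) to the resulting families $\mathcal{Z}_t$ of $d$-dimensional CCSs to force the foliation up by one. The final contradiction is then the surface bound \cite{M} Cor.~2.12 applied to a $(d+g-1)$-fold foliated by $(d+1)$-dimensional orbits.

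Your induction $g\to g-1$ tries to avoid all of this, but the dichotomy you set up leaks. In your Case~2 (generic fibre of $Z_t\to (A')^k$ a curve) nothing prevents \emph{all} the $Z_t$ from projecting to the \emph{same} $(d-1)$-dimensional orbit $W\subset (A')^k$: the $(d+1)$-dimensional union $\bigcup_t Z_t$ then maps to $W$ with two-dimensional fibres in $E^k$, each fibre a one-parameter family of one-dimensional $E$-orbits. In that situation you have lost both a dimension of the orbit and the family parameter, and Theorem~\ref{mainthm} on $A'$ gives only $k\geq (d-1)+g/2$, which is too weak. You also never invoke the ``vary the elliptic factor $E$'' mechanism of \cite{M}, which is precisely what gains a dimension at each step and is what makes the descent to surfaces work; a single projection for a fixed $E$ can only lose dimension. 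Finally, your numerics and base case are not pinned down: the paper notes that Theorem~\ref{strong} is already implied by Theorem~\ref{mainthm} when $g$ is even, so the content is entirely in odd $g$, and there the endgame must land on the surface bound, not on a threefold.
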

Note that this theorem follows trivially from Thm. \ref{mainthm} in the case where $\dim A$ is even. We can use this last theorem to rule out the existence of some families of correspondences. We will consider $d$-dimensional subvarieties $Y$ of $A\times \mathbb{P}^d$ with generically finite projection to $A$ and $\mathbb{P}^d$, and call the degree $\text{deg}(Y)$ of such a correspondence the degree of the projection $Y\to \mathbb{P}^d$. Moreover, we will assume that the map $\mathbb{P}^d\to \text{Sym}^kA$ arising from looking at the fibers of the projection of $Y$ on the second factor is injective. In particular, the image of such a correspondence $Y$ under the projection to $A$ is a $d$-dimensional subvariety of $A$ with degree of uni-irrationality at most $\text{deg}(Y)$. We roughly think of such a correspondence as a geometrically tractable approximation to a ``degree of uni-irrationality datum'', namely the datum of 
$$(Y\subset X, f: Z\dashrightarrow Y, h: Z\dashrightarrow \mathbb{P}^d),$$
where $f$ and $h$ are dominant rational maps.

In this spirit, the following corollary should be seen as a sort of rigidity statement about such data. Though its range of applications seems limited considering that Theorem 1.3 is likely not sharp, it serves as a prototype of geometric results that can be harnessed from the non-existence of families of high-dimensional constant cycle subvarieties of small degree. Here we use standard scheme-theoretic notation and, for varieties $X/\mathbb{C}$, $C/\mathbb{C}$, we write $X_C$ for the base change of $X\to \text{Spec}(\mathbb{C})$ by $C\to \text{Spec}(\mathbb{C})$, i.e. for the variety $X\times C$.
\begin{COR}\label{rigid}
Let $A$ be a very general abelian variety of dimension $g=2l+1$ and $C$ be a curve. Consider $\mathcal{Y}\subset (A\times \mathbb{P}^d)_C$ with relative dimension $d$ over $C$ and $1\leq d\leq \dim A$. Suppose that the projections to $A_C$ and $\mathbb{P}^d_C$ are both generically finite and that the morphism $\mathbb{P}^d_C\to (\textup{Sym}^{\deg(\mathcal{Y})}A)_C$ arising from looking at the fibers of the second projection is injective. Then the degree of the projection of $\mathcal{Y}$ to $A_C$ is at least $d+l+1$. Moreover, if this degree is $d+l+1$, there is a morphism $\phi: C\to \textup{Aut}(\mathbb{P}^d)$ and a $Y_0\subset A\times \mathbb{P}^d$ such that $\mathcal{Y}=\{[a, \phi(c)(x),c]: (a,x)\in Y_0\}\subset A\times \mathbb{P}^d\times C=(A\times\mathbb{P}^d)_C$.
\end{COR}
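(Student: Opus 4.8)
The plan is to combine Theorem \ref{strong} with a rigidity analysis in the equality case. First, observe that $\mathcal{Y}\subset (A\times\mathbb{P}^d)_C$ provides a one-parameter family of $d$-dimensional subvarieties of $A\times\mathbb{P}^d$: for $c\in C$ generic, the fiber $\mathcal{Y}_c\subset A\times\mathbb{P}^d$ has generically finite projections to both factors, and the injectivity hypothesis on $\mathbb{P}^d_C\to(\text{Sym}^{\deg(\mathcal{Y})}A)_C$ restricts (for generic $c$) to an injection $\mathbb{P}^d\to\text{Sym}^{\deg(\mathcal{Y})}A$ reading off the fibers of the second projection of $\mathcal{Y}_c$. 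By the discussion preceding the corollary (the correspondence-to-$\text{uni.irr}$ dictionary, itself a consequence of Remark \ref{remark}), the image of $\mathcal{Y}_c$ under the projection to $A$ is a $d$-dimensional subvariety $Y_c\subset A$ whose points are swept out, via the second projection, by orbits for rational equivalence of degree $k:=\deg(\mathcal{Y})$. Letting $c$ vary over $C$ produces a one-dimensional family of $d$-dimensional orbits of degree $k$ on $A$ (after possibly passing to a curve finite over $C$ and discarding the locus where the family is isotrivial in the appropriate sense — if it is isotrivial one is already in the rigid situation to be described). Theorem \ref{strong} then forces $k\geq d+\dim A/2+1=d+l+1$ since $g=2l+1$, which is the first assertion.

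For the equality case $k=d+l+1$, the idea is to extract rigidity from the fact that Theorem \ref{strong} is \emph{sharp} here, so the family cannot genuinely move. More precisely, I would revisit the proof of Theorem \ref{strong} (the degeneration-and-projection induction of \cite{M}, run one step from $g=2l+1$ to $g-1=2l$ and then invoking Theorem \ref{mainthm} or the base hyper-K\"ahler obstruction): when $k$ attains its minimum, every inequality used along the induction must be an equality, and in particular the family of orbits must be ``constant up to the ambiguity inherent in the construction.'' The ambiguity is exactly the choice of a projective frame: the data $(Y_c\subset A)$ together with the embedding $\mathbb{P}^d\hookrightarrow\text{Sym}^kA$ determine $\mathcal{Y}_c\subset A\times\mathbb{P}^d$ only up to postcomposition with an automorphism of $\mathbb{P}^d$. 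Concretely, I expect to show that the abelian-variety component $Y_0:=Y_c\subset A$ is independent of $c$ (a very general $A$ has no nontrivial families of subvarieties of a fixed cohomology class realizing the extremal degree — this should follow from the same monodromy/very-general argument underlying Theorem \ref{mainthm}), while the $\mathbb{P}^d$-component twists by a varying projective transformation. This yields $\phi:C\to\Aut(\mathbb{P}^d)$ with $\mathcal{Y}_c=\{(a,\phi(c)(x)):(a,x)\in Y_0\}$, i.e. $\mathcal{Y}=\{[a,\phi(c)(x),c]:(a,x)\in Y_0\}$; regularity of $\phi$ (a priori only rational on $C$) follows from properness of $\Aut(\mathbb{P}^d)=\psl_{d+1}$ acting on the Hilbert scheme component containing the $\mathcal{Y}_c$, after normalizing $C$.

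The main obstacle is the second part: establishing that equality in Theorem \ref{strong} genuinely rigidifies the family rather than merely bounding its degree. This requires opening up the induction of \cite{M} and tracking which geometric configurations saturate each step — in particular controlling the behavior of the projected constant-cycle subvariety under the degeneration $A\rightsquigarrow (A'\times E)/(\text{isogeny})$ and pinning down that the only surviving moduli in the extremal case is the projective frame. A secondary technical point is handling the passage from the family $\mathcal{Y}$ over $C$ to the ``one-dimensional family of $d$-dimensional orbits'' in the precise sense of Theorem \ref{strong}: one must check the family of orbits is not generically constant (else there is nothing to prove, as $\mathcal{Y}$ is then pulled back from a single fiber, a special case of the claimed form with $\phi$ constant) and that the injectivity hypothesis propagates to the generic fiber so that each $\mathcal{Y}_c$ really is cut out by a genuine degree-$k$ orbit. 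I would also need to confirm that $\deg(\mathcal{Y})$, defined as the degree of the projection to $\mathbb{P}^d_C$, agrees fiberwise with the degree of $\mathcal{Y}_c\to\mathbb{P}^d$, which is immediate by flatness after shrinking $C$.
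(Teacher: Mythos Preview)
Your first paragraph contains an arithmetic slip that obscures the actual logic. For $g=2l+1$ one has $d+\dim A/2+1=d+l+3/2$, so Theorem~\ref{strong} gives $k\geq d+l+2$ whenever there is a genuine one-dimensional family of $d$-dimensional orbits---not $k\geq d+l+1$. The bound $k\geq d+l+1$ (for \emph{any} single orbit, moving or not) comes from Theorem~\ref{mainthm}: $\nu_d(A)\geq d+(g+1)/2=d+l+1$. So the two theorems play different roles: Theorem~\ref{mainthm} supplies the lower bound in the first assertion, and Theorem~\ref{strong} is what forces rigidity in the equality case.

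This is exactly where your proposed route for the second part goes astray. You suggest ``opening up the induction of \cite{M} and tracking which geometric configurations saturate each step,'' and call this the main obstacle. But there is no obstacle: if $k=d+l+1<d+l+2$, Theorem~\ref{strong} says outright that there is \emph{no} one-dimensional family of $d$-dimensional orbits of degree $k$. Hence the image $\eta(\mathbb{P}^d_c)\subset\text{Sym}^kA$ is the same subvariety for every $c\in C$. You even note parenthetically that ``if it is isotrivial one is already in the rigid situation''---but you do not recognize that Theorem~\ref{strong} has already placed you there.

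Once the family of constant-cycle subvarieties is constant, the paper extracts $\phi$ by a short, purely formal argument using only the injectivity hypothesis: form the graph $\Gamma\subset\mathbb{P}^d_C\times_C(\text{Sym}^kA)_C$ of $\eta$ and the fiber product $R=\Gamma\times_{\text{Sym}^kA}\Gamma\subset\mathbb{P}^d_C\times\mathbb{P}^d_C$. Fixing $0\in C$ and looking at $R_0\subset\mathbb{P}^d\times\mathbb{P}^d_C$, injectivity of $\eta$ makes each fiber $R_{0,c}\subset\mathbb{P}^d\times\mathbb{P}^d$ the graph of an automorphism, yielding $\phi:C\to\Aut(\mathbb{P}^d)$ with $\mathcal{Y}_c=(\id_A\times\phi(c))(\mathcal{Y}_0)$. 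No equality-case analysis of the induction, and no appeal to rigidity of subvarieties of $A$, is needed.
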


In the second section, we prove the results above contingent on Claim \ref{claim} using methods from \cite{M}. In the last section, we use Generic Vanishing theory to prove a projection lemma and use it in turn to prove Claim \ref{claim}.

\section{Proofs}

\begin{proof}[Proof of Theorem \ref{mainthm}] Let $ \mathcal{A}\to S$ be a locally complete family of abelian varieties of dimension $g$, that is, a family such that the natural map from $S$ to the corresponding moduli space of abelian varieties is dominant and
generically finite.

Suppose that a very general abelian variety in this family has a $d$-dimensional orbit for rational equivalence of degree $k$. Basic facts about rational equivalence imply that, after a convenient base change, we have a family $\Gamma\subset \mathcal{A}^k$ of $d$-dimensional subvarieties such that $\Gamma_s$ is contained in a fiber of $\mathcal{A}_s^k\to CH_0(\mathcal{A}_s)$, for every $s\in S$. Here and in the rest of this paper we write $\mathcal{A}^k$ for $\mathcal{A}_S^k:=\mathcal{A}\times_S\mathcal{A}\times_S\ldots\times_S\mathcal{A}$ to simplify notation.

Let $S_{\lambda }\subset S$ be the loci along which $\mathcal{A}_s$ is isogenous to $\mathcal{B}_s^\lambda\times \mathcal{E}_s^\lambda$, where $\mathcal{B}^\lambda/S_\lambda$ and $\mathcal{E}^\lambda/S_\lambda$ are families of abelian $(g-1)$-folds and elliptic curves respectively. Here $\lambda $ encodes the type of the isogeny and belongs to an indexing set $\Lambda_{g-1}$. Similarly, let $S_\tau\subset S$ be the loci along which $\mathcal{A}_s$ is isogenous to $\mathcal{D}_s^\tau\times \mathcal{F}_s^\tau$, where $\mathcal{D}^\tau/S_\tau$ and $\mathcal{F}^\tau/S_\tau$ are families of abelian surfaces and $(g-2)$-folds respectively. Here $\tau$ encodes the type of the isogeny and belongs to an indexing set $\Lambda_{2}$. Denote by $\mathcal{A}'/S_\lambda'$ (resp. $\mathcal{D}'/S_\tau'$) the universal family of abelian $(g-1)$-folds (resp. surfaces) to which the family $\mathcal{B}^\lambda/S_\lambda$ (resp. $\mathcal{D}^\tau/S_\tau$) maps, and by $\phi_\lambda: (\mathcal{B}^\lambda)^k\to \mathcal{A}'^k/S_\lambda'$ (resp. $\phi_\tau: (\mathcal{D}^\tau)^k\to \mathcal{D}'^k/S_\tau'$) the $k$-fold fiber product of the  corresponding morphism. Finally, call $p_{\lambda}$ (resp. $p_\tau$) the projection $\mathcal{A}_{S_\lambda}^k\to (\mathcal{B}^\lambda)^k$ (resp. $\mathcal{A}^k_{S_\tau}\to (\mathcal{D}^\tau)^k$), which exists after passing to a generically finite cover of the base.\\

Following \cite{M} we define condition (*) (for $l=2$) as follows : 
\vskip 3mm
$(*)$ The subset $R_{gf}=\bigcup_{\tau} \{s\in S_{\tau} \mid \dim p_\tau(\Gamma_s)=\dim \Gamma_s \}\subset S$ is dense.
\vskip 3mm

The proof of the theorem rests crucially on the following claim:

\begin{CLA}\label{claim}
A flat family $\Gamma\subset \mathcal{A}^k$ of constant cycle subvarieties (of relative dimension $d$ over $S$) satisfies condition $(*)$ for $l=2$ provided that, either $g=\dim_S \mathcal{A}$ is even, or $1\leq d\leq g$ and $k\leq 2g-2$.
\end{CLA}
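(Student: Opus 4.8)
The plan is to derive condition $(*)$ from a single ``dimension-reduction step'' for families of constant cycle subvarieties, which is then iterated down to abelian surfaces. The step to be established is the following (a \emph{Projection Lemma}, whose proof by Generic Vanishing is the real content): given a locally complete family $\mathcal{A}\to S$ of abelian $m$-folds carrying a flat family $\Gamma\subset\mathcal{A}^k$ of $d$-dimensional constant cycle subvarieties, one wants, for a dense set of elliptic splittings, to split off one or more elliptic factors and project onto the complementary power \emph{without dropping the relative dimension of $\Gamma$}. Note that if $\mathcal{A}_s\sim\mathcal{B}_s\times\mathcal{E}_s$ and $p\colon\mathcal{A}_s^k\to\mathcal{B}_s^k$ is the projection, then $\dim p(\Gamma_s)<d$ precisely when, at a general point $\gamma\in\Gamma_s$, the tangent space $T_\gamma\Gamma_s$ meets the ``vertical'' subspace $(T_0\mathcal{E}_s)^k\subset(T_0\mathcal{A}_s)^k$; so the task is to rule this out for densely many choices of the elliptic part. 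The input is the classical vanishing (Mumford, Bloch--Srinivas): on a desingularization of $\Gamma_s$, with $q_1,\dots,q_k$ the induced maps to $\mathcal{A}_s$, one has $\sum_{i=1}^k q_i^*\omega=0$ for every holomorphic form $\omega$ on $\mathcal{A}_s$, together with the variational version of these identities obtained by differentiating along the locally complete base $S$. The latter is essential: the bare constant cycle condition on a single abelian variety is too weak, and in particular does not by itself prevent $\Gamma_s$ from being vertical for every elliptic splitting at once; it is the family structure that rigidifies things.

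Feeding the variational identities into Generic Vanishing theory for the universal family should yield two building blocks. (A) If $k\leq 2m-2$ (and $1\leq d\leq m$), one may replace the $m$-fold family by an $(m-1)$-fold family, densely in the base, without dropping the relative dimension; the pushed-forward $\Gamma$ is again a flat family of $d$-dimensional constant cycle subvarieties, since push-forward of $0$-cycles preserves rational equivalence, over a base that is again locally complete after a generically finite cover, because every $(m-1)$-fold arises from such a splitting and the product loci together with their Hecke translates are dense. (B) If $m$ is even, the parity forces the obstructing generic-vanishing loci to be empty, so the same holds with \emph{no} constraint on $k$, and one may even pass directly to an $(m-2)$-fold family (splitting off a surface), again keeping the relative dimension equal to $d$. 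The threshold ``$2m-2$'' in (A) is the Clifford/generic-vanishing bound for the twisted cohomology group, attached to a suitable coherent sheaf pushed forward from a desingularization of $\Gamma$, which would have to be nonzero on a general abelian $m$-fold were the projection to drop dimension for densely many splittings.

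Granting (A) and (B), Claim \ref{claim} follows by iterating, arranged so that the dimension only descends through even values after an initial step. If $g$ is even, apply (B) repeatedly, $g\rightsquigarrow g-2\rightsquigarrow\cdots\rightsquigarrow 2$, each step legitimate because the current dimension is even, with the relative dimension of $\Gamma$ remaining equal to $d$ throughout. If $g$ is odd, first apply (A) once, $g\rightsquigarrow g-1$, which is valid because $k\leq 2g-2$ and $1\leq d\leq g$, reaching the even dimension $g-1$, and then proceed as in the even case. Composing all the projections yields, over a dense subset of $S$ (the successive generically finite base changes do not affect density), a splitting $\mathcal{A}_s\sim\mathcal{D}_s\times\mathcal{F}_s$ with $\mathcal{D}_s$ an abelian surface and $\dim p_\tau(\Gamma_s)=d=\dim\Gamma_s$, which is exactly the assertion that $R_{gf}$ contains a dense subset of $S$; hence condition $(*)$ for $l=2$ holds. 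The main obstacle is proving (A) and (B), i.e.\ the Projection Lemma itself: one has to show that, once the form-vanishing identities are differentiated along $S$, the vertical tangent directions of $\Gamma_s$ relative to a general elliptic splitting become so overdetermined that a drop in dimension would force the non-vanishing of a cohomology group that Generic Vanishing excludes in the stated range; the surrounding bookkeeping --- flatness, preservation of the constant cycle property, and local completeness and density through the base changes --- is routine but must be carried out carefully.
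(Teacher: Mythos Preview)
Your high-level outline (reduce odd $g$ to even $g$ by one elliptic projection, then handle the even case) matches the paper's, but the mechanisms you propose for steps (A) and (B) are not the paper's, and as stated they are not justified.

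For the even case the paper does \emph{not} iterate $g\rightsquigarrow g-2\rightsquigarrow\cdots$, nor does it argue that ``parity forces the obstructing generic-vanishing loci to be empty''. Instead it specializes once to the dense locus where $\mathcal{A}_s$ is isogenous to $D^n$ with $D$ a \emph{simple} abelian surface. Generic Vanishing enters only through Lemma~3.1: for any fixed $X\subset A$ there are finitely many ``bad'' abelian subvarieties $F_a\subset A$ such that a projection $p_T|_X$ can fail to be generically finite (with image not covered by tori) only if $T\supset F_a$ for some $a$. On $[D^n]^k$ with $D$ simple, every abelian subvariety is $i_M(D^l)$ for some integer matrix $M$, so avoiding the finitely many $F_a$ is an open condition on $M\in M_{n,n-1}(\mathbb{Z})$ (this is the Projection Lemma~3.2). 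One then checks $p_M(\Gamma_s)$ is not covered by tori using the isotropy of $\Gamma_s$ for the forms $\omega_k$. Parity plays no role beyond the obvious fact that one needs $g=2n$ to specialize to a power of a surface.

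For the odd case, the bound $k\leq 2g-2$ in the paper does not come from a Clifford-type generic-vanishing inequality. The paper specializes to $\mathcal{A}_s\sim E^g$ and argues: if the projection along $i_M(E)$ dropped dimension for \emph{every} $M\in\mathbb{Z}^g$, then (for $d=g$; the case $d<g$ is trivial) each tangent space $T_y\Gamma_s$ would be of the form $M(T_0\mathcal{A}_s)$ for some $M\in\mathbb{C}^k$. The constant cycle condition forces the power-sum equations $\sum_i m_i^q=0$ for $q=1,\ldots,g$, cutting the locus of admissible $M$ down to dimension at most $k-(g-1)$; hence the Gauss map of $\Gamma_s$ has image of dimension $<g$ when $k\leq 2g-2$, contradicting the Griffiths--Harris nondegeneracy of the Gauss map for subvarieties not covered by tori. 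No variational differentiation along $S$ is used in the proof of the Claim; everything happens pointwise at a special $s$.

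In short, the proposal leaves (A) and (B) as black boxes with heuristics (``Clifford/GV bound'', ``parity empties the loci'') that do not correspond to the actual arguments, and it is not clear they can be made to work as stated. The paper's proof is more concrete: a finiteness lemma from Generic Vanishing, an integer-matrix genericity argument after specializing to self-products of a simple surface, and a Gauss map/power-sum computation for the odd-to-even step.
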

We complete the proof of Theorem \ref{mainthm} assuming this claim which will be proved at the end of this note.
Since any $\Gamma_s$ is contained in an orbit, by \cite{M} Lem. 3.4, $\Gamma$ also satisfies :
\vskip 3mm
$(**)$ The subset $\bigcup_{\tau}\{s\in R_{gf}\cap S_\lambda \mid p_\tau(\Gamma_s) \text{ is not covered by tori}\}\subset R_{gf}$ is dense.
\vskip 3mm
Therefore it is possible to apply \cite{M} Prop. 3.5 and Prop. 4.1 in order to see that for some $\tau\in \Lambda_2$, the subvariety $\phi_\tau(p_\tau(\Gamma_{S_\lambda}))\subset {\mathcal{D}'}^{k,\, 0}:=\ker({\mathcal{D}'}^k\to \mathcal{D}')$ has dimension at least $d+g-2$. Roughly speaking, the basic idea of the induction is the following: one can show that for suitable $\lambda\in \Lambda_{g-1}$ the relative dimension of $\phi_\lambda(p_\lambda(\Gamma_{S_\lambda}))/S_\lambda'$ is $d+1$. This amounts to saying that, when projecting to $B_s^k$ and varying the elliptic factor $E_s$, the union of the image has dimension $d+1$. The argument uses the fact that $p_{\lambda,s}|_{\Gamma_s}$ is generically finite on its image for a dense set of $s\in \bigcup S_\lambda\subset S$, which is a consequence of $(*)$. If we can ensure that $\phi_\lambda(p_\lambda(\Gamma))/S_\lambda'$ still satisfies condition $(*)$, and this is the key technical step, then we can proceed with the induction since we will have the desired generic finiteness at each step.

To achieve this, one considers a further specialization to $\mathcal{A}_s$  isogenous to $D_s\times F_s\times E_s$, with $E_s$ an elliptic curve, compatible with a specialization of the generic abelian variety $B_s$ of dimension $g-1$ to one isogenous to $D_s\times F_s$. Then, moving the elliptic curve $E_s$ in moduli, we can project $\Gamma_s\subset \mathcal{A}_s^k$ to either $D_s^k$ or $(D_s\times F_s)^k$. In both cases the image will vary as we vary the elliptic factor and thus we will get a $(d+1)$-fold $Z_1\subset D_s^k$ and a $(d+1)$-fold $Z_2\subset (D_s\times F_s)^k$. Since these $(d+1)$-folds are irreducible and $Z_1$ dominates $Z_2$ under the projection $(D_s\times F_s)^k\to D_s^k$, the restriction of this projection to $Z_1$ must be generically finite on its image. This is how condition $(*)$ is verified to hold at each step of the induction. The reader should consult section 4 of \cite{M} for a thorough explanation of this argument.

We can thus find a $\tau\in \Lambda_2$ such that $\phi_\tau(p_\tau(\Gamma_{S_\tau}))\subset {\mathcal{D}'}^{k,\, 0}:=\ker({\mathcal{D}'}^k\to \mathcal{D}')$ has relative dimension $d+g-2$ over $S_\tau'$. Moreover, this variety is foliated by $d$-dimensional constant cycle subvarieties. We can also choose $\tau$ such that, for a generic $D$ in the family $\mathcal{D}^\tau$, the map from $S_\tau(D):=\{s\in S_\tau: \mathcal{D}^\tau_s=D\}$ to an appropriate Chow variety given by $s\mapsto [\phi_\tau(p_\tau(\Gamma_{s}))]$ is generically finite. As long as $g\geq 4$, we can now apply the same argument as in \cite{M} Th. 4.4 to see that $\phi_\tau(p_\tau(\Gamma_{S_\tau}))$ must be foliated by constant cycle subvarieties of dimension at least $d+1$. 

Now observe that by a refinement of the Mumford-Ro\u{\i}tman bound for the size of rational orbits on surfaces (see \cite{M} Cor. 2.12) we have $d+g-2\leq 2k-2-(d+1)$ if $g\geq 4$ and $d+1\leq 2k-2-d$ if $g=3$.
\end{proof}

\begin{proof}[Proof of Theorem \ref{strong}]
It suffices to show that a very general abelian variety of dimension $g=2l+1$ does not admit a one-dimensional family of $d$-dimensional orbits of degree $k:=d+l+1$. We follow the same proof as above, with $\Gamma\subset \mathcal{A}^k$ a family of $(d+1)$-dimensional varieties such that,  for every $s\in S$, the variety $\Gamma_s\subset \mathcal{A}^k_s$ is foliated by constant cycle subvarieties of degree $k$. The key point is that $\Gamma_s$ is totally isotropic for 
$$\omega_k:=\sum\text{pr}_i^*\omega\in H^0(\mathcal{A}_s^k,\Omega_{\mathcal{A}_s^k}^2),$$
 for any $\omega\in H^0(\mathcal{A}_s,\Omega_{\mathcal{A}_s}^2)$. Of course $p_\tau(\Gamma_s)$ is either a degree $k$ constant cycle subvariety or it is foliated in codimension 1 by constant cycle subvarieties. Hence it enjoys the analogous total isotropy property. This was the key property we used about constant cycle subvarieties.

Just as in the proof above, there is a $\tau$ such that $\phi_\tau(p_\tau(\Gamma_{S_\tau}))$ is $(d+g-1)$-dimensional. In fact the method of \cite{M} actually show that there is a non-empty set $\Lambda$ indexing $\tau$ such that $\phi_\tau(p_\tau(\Gamma'_{S_\tau}))$ is $(d+g-1)$-dimensional, for any complement $\Gamma'_{S_\tau}$ of a countable union of Zariski closed subsets of $\Gamma_{S_\tau}$. Moreover, it is easy to see that, for any $\tau\in \Lambda$ and an integrable foliation $\{\mathcal{Z}_t\}_{t\in \Theta}\subset \Gamma_{S_\tau}$ by subvarieties of relative dimension $d$ over $S_\tau$ indexed by $\Theta$, the variety $\phi_\tau(p_\tau(\mathcal{Z}_{t}))$ has relative dimension $d+g-2$ over $S_\tau$ for generic $t\in \Theta$. Moreover, from the way this set $\Lambda$ is obtained it is clear that $\bigcup_{\tau\in \Lambda}S_\tau\subset S$ is dense. We contend that $\tau$ can be chosen so that $\phi_\tau(p_\tau(\Gamma_{S_\tau}))$ is foliated by constant cycle subvarieties of dimension at least $d+1$. It is clear from the way such $\tau$ are obtained in \cite{M} that we can choose $\tau$ such that $p_{\tau}|_{\Gamma_s}$ is generically finite on its image for a generic $s\in S_\tau$. This condition determines a subset $\Lambda'\subset \Lambda$ and we can make sure that $\bigcup_{\tau\in \Lambda'}S_\tau$ is dense in $S$.

Since $\Gamma$ is foliated by $d$-dimensional constant cycle subvarieties, we can find a rational map $f: \Gamma/S\dasharrow \mathcal{C}/S$, where $\mathcal{C}/S$ is a family of curves and the fibers of this map are constant cycle subvarieties of degree $k$. For every $\tau$ such that $f$ is defined on $\Gamma_{S_\tau}$, there is an open subset $\mathcal{C}_{\tau}\subset \mathcal{C}_{S_\tau}$ such that $p_{\tau}|_{\Gamma_c}$ is generically finite on its image for $c\in \mathcal{C}_\tau$. Consider a pencil of hypersurface sections of $\mathcal{C}$. These hypersurface sections dominate the base and can be thought of as multisections $\{s_t\}_{t\in \mathbb{P}^1}$ of $\mathcal{C}\to S$. Now, for a very general $t\in \mathbb{P}^1$, the multisection $s_t$ is contained in $\mathcal{C}_{\tau,s}$ for every $\tau\in \Lambda'$ and generic $s\in S_\tau$. Upon passing to a generically finite cover of $S$, these multisections provide families $\mathcal{Z}_t$ of $d$-dimensional constant cycle subvarieties, for $t\in \mathbb{P}^1$ generic. We can then apply the argument of \cite{M} section 3 to see that there is a $\tau\in \Lambda'$ such that the map $s\mapsto [\phi_\tau(p_\tau({\mathcal{Z}_{t,s}}))]$ from $S_\tau$ to an appropriate Chow variety is generically finite for very general $t\in \mathbb{P}^1$.

We claim that for such a $\tau$ the variety $\phi_\tau(p_\tau(\Gamma_{S_\tau}))$ is foliated by constant cycle subvarieties of degree $k$ of dimension at least $d+1$. Indeed, note that for $t$ very general the variety $\phi_\tau(p_\tau({\mathcal{Z}_{t,S_\tau}}))$ is at least $(d+g-2)$-dimensional and either foliated by constant cycle subvarieties of dimension at least $d+1$ or it is a subvariety of dimension at least $d+g-2+6=d+g+4$, where the number $6$ is the dimension of the moduli of abelian $3$-folds with some fixed polarization. The second case cannot happen by Cor. 2.12 of \cite{M}. Hence, for very general $t$, the variety $\phi_\tau(p_\tau({\mathcal{Z}_{t,S_\tau}}))$ is foliated by $(d+1)$-dimensional orbits. Taking the union of $\phi_\tau(p_\tau({\mathcal{Z}_{t,S_\tau}}))$ over all such $t$, we see that $\phi_\tau(p_\tau(\Gamma_{S_\tau}))$ is itself foliated by $(d+1)$-dimensional orbits. This provides a contradiction to Cor. 2.12 of \cite{M}, the refinement of the Mumford-Ro\u{\i}tman bound for the size of rational orbits on surfaces.
\end{proof}

\begin{proof}[Proof of Corollary \ref{rigid}]
Let $\text{pr}_1: (A\times \mathbb{P}^d)_C\to A_C$ and $\text{pr}_2: (A\times \mathbb{P}^d)_C\to \mathbb{P}^d_C$ be the projections. Considering fibers of $\text{pr}_2|_{\mathcal{Y}}$ gives a morphism $\eta: \mathbb{P}^d_C\to (\text{Sym}^kA)_C$ whose image is a family of constant cycle subvarieties, namely $\eta(\mathbb{P}^d_c)\subset \text{Sym}^kA$ is a constant-cycle subvariety for every $c\in C$. If ${\textup{pr}_1}|_{\mathcal{Y}}$ has degree less than $d+l+2$ then this family must be constant. Consider the graph $\Gamma\subset \mathbb{P}^d_C\times_C(\text{Sym}^kA)_C=\mathbb{P}^d\times\text{Sym}^kA\times C$ of $\eta$ and the fiber product:
$$R:=\Gamma\times_{\text{Sym}^kA}\Gamma=\{(t,t')\in \mathbb{P}^d_C\times \mathbb{P}^d_C: (\text{pr}_2|_{\mathcal{Y}})^{-1}(t)=(\text{pr}_2|_{\mathcal{Y}})^{-1}(t')\in\text{Sym}^kA\}\subset \mathbb{P}^d_C\times \mathbb{P}^d_C.$$
Fixing $0\in C$, we see that the projection of $R_0\subset \mathbb{P}^d\times \mathbb{P}^d_C$ onto the second factor is an isomorphism. Moreover, for every $c\in C$, the projection of the fiber $R_{0,c}\subset \mathbb{P}^d\times \mathbb{P}^d$ onto the first factor is an isomorphism. As a consequence, we can think of $R_0$ as a family of automorphisms of $\mathbb{P}^d$ indexed by $C$. We thus get a morphism $\phi$ from $C$ to the automorphism group of $\mathbb{P}^d$ and $\mathcal{Y}=\{[a,\phi(c)(x),c]: (a,x)\in Y_0\}\subset A\times\mathbb{P}^d\times C$.
\end{proof}

\vskip 3mm

\section{Generic Vanishing and Proof of Claim \ref{claim}}

Given an abelian subvariety $T$ of an abelian variety $A$, let $p_T: A\to A/T$ be the quotient map. Denote by $\text{Sub}(A)$ the poset of positive-dimensional abelian subvarieties of $A$ under inclusion. The proof of Claim \ref{claim} uses the following lemma:

\begin{LEM}\label{fin} Let $X$ be a subvariety of an abelian variety $A$. There is a finite subset $S_X\subset \textup{Sub}(A)$ such that if $T\in \textup{Sub}(A)$ is such that $p_{T|X}$ is not generically finite on its image and $p_T(X)$ is not covered by tori, then $T$ contains an element of $S_X$.
\end{LEM}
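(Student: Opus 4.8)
The plan is to reduce the statement, via two elementary steps, to a finiteness assertion that I would then settle using the structure theory of cohomological support loci (generic vanishing). Throughout I use the following fact for an irreducible subvariety $Z$ of an abelian variety $B$: $Z$ is covered by tori if and only if $\mathrm{Stab}(Z)^{0}\neq 0$, where $\mathrm{Stab}(Z)=\{b\in B:b+Z=Z\}$. The nontrivial implication holds because the translates of positive-dimensional abelian subvarieties contained in $Z$ are parametrized by a countable union of closed subsets of a Chow variety of $B$ — one subset $\{[y+B']:y+B'\subseteq Z\}$ for each $B'\in\text{Sub}(B)$ — so if these dominate $Z$ then, $Z$ being irreducible over $\mathbb{C}$, one of them must equal $Z$, i.e. $Z+B'=Z$ for some positive-dimensional $B'$. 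We may and do assume $X$ irreducible. \emph{First reduction:} a strict inclusion of abelian subvarieties strictly lowers the dimension, so $\text{Sub}(A)$ admits no infinite descending chains; hence every subvariety $T$ that is \emph{bad} (meaning $p_{T}|_{X}$ is not generically finite onto its image and $p_{T}(X)$ is not covered by tori) contains a minimal bad one, and, since $T=0$ is not bad, such a minimal $T$ lies in $\text{Sub}(A)$. It therefore suffices to take $S_{X}$ to be the set of minimal bad subvarieties and prove it is finite.

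\emph{Second reduction.} Let $T$ be bad, let $F$ be the fibre of $p_{T}|_{X}$ over a general point of $p_{T}(X)$ — a positive-dimensional subvariety of $X$ — and put $T_{F}:=\langle F-F\rangle\in\text{Sub}(A)$, so $T_{F}\subseteq T$. Since $F$ lies in a single coset of $T_{F}$ while each fibre of $p_{T_{F}}|_{X}$ is contained in a fibre of $p_{T}|_{X}$, the general fibre of $p_{T_{F}}|_{X}$ is exactly $F$; in particular $p_{T_{F}}|_{X}$ is not generically finite and $T_{F}=\langle F'-F'\rangle$ for $F'$ its general fibre. Call such an $S\in\text{Sub}(A)$ \emph{fibre-generated}. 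If $p_{T_{F}}(X)$ is not covered by tori, then $T_{F}$ is bad, so $T=T_{F}$ by minimality, and note $\mathrm{Stab}(X+T)^{0}=T$. Otherwise $\bar B:=\mathrm{Stab}(p_{T_{F}}(X))^{0}\neq 0$; let $B\subseteq A$ be its preimage, so $T_{F}\subsetneq B$. Then $p_{B}|_{X}$ is still not generically finite (it factors through $p_{T_{F}}|_{X}$); $p_{B}(X)=p_{\bar B}(p_{T_{F}}(X))$ has finite stabilizer, hence is not covered by tori; and $B\subseteq T$, for otherwise the cosets of $\bar B$ covering $p_{T_{F}}(X)$ would map onto positive-dimensional cosets covering $p_{T}(X)$, contradicting badness of $T$. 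So $B$ is bad, whence $B=T$ by minimality, i.e. $T=\mathrm{Stab}(X+T_{F})^{0}$. Summarizing: every minimal bad $T$ is of the form $\mathrm{Stab}(X+S)^{0}$ for a fibre-generated $S\subseteq T$, and $p_{T}(X)$ has trivial connected stabilizer.

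\emph{The crux: finiteness via generic vanishing.} It remains to prove that there are only finitely many $T$ of this form; equivalently, since $X+S=X+\mathrm{Stab}(X+S)^{0}$, that there are finitely many subvarieties $X+S$ with $S$ fibre-generated and $\mathrm{Stab}(X+S)^{0}$ bad. This is where I would invoke generic vanishing. Take a resolution $\nu:\widetilde{X}\to X$, set $a:=\iota\circ\nu:\widetilde{X}\to A$, and consider the generic-vanishing sheaf $a_{*}\omega_{\widetilde{X}}$. By the Green--Lazarsfeld--Simpson structure theorem for its cohomological support loci $V^{i}(a_{*}\omega_{\widetilde{X}})$ — together with Hacon's generic vanishing theorem and the Chen--Jiang decomposition of $a_{*}\omega_{\widetilde{X}}$, as refined by Pareschi--Popa and Schnell — there is a finite list of quotients $A\to A/B_{1},\dots,A\to A/B_{m}$ capturing all the degeneracy of $a_{*}\omega_{\widetilde{X}}$ (the $B_{j}^{\perp}\subseteq\widehat{A}$ being the abelian subvarieties that occur, up to torsion translation, as the irreducible components of $\bigcup_{i}V^{i}(a_{*}\omega_{\widetilde{X}})$). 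The defining conditions on $T$ — namely that $q_{T}:=p_{T}\circ a:\widetilde{X}\to A/T$ fails to be generically finite onto its image while $p_{T}(X)$ has trivial connected stabilizer — translate, via $R^{0}q_{T,*}\omega_{\widetilde{X}}$ and the triviality of $\omega_{A/T}$, into constraints on $V^{\bullet}(a_{*}\omega_{\widetilde{X}})$ that force the quotient $A\to A/T$ to be built, in a bounded fashion, out of the $A\to A/B_{j}$. That yields the finiteness of $S_{X}$ and, with the two reductions, the lemma. I expect this last translation to be the main obstacle: the two reductions are formal manipulations with fibrations of subvarieties of abelian varieties, whereas recovering $T$ genuinely requires information about the support loci of $a_{*}\omega_{\widetilde{X}}$.
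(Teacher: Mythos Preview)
Your two reductions are carried out correctly, but the third step—the one you label ``the crux''—is not a proof; it is a hope. You invoke the Chen--Jiang decomposition, Hacon's theorem, and the Green--Lazarsfeld--Simpson structure theorem, and then assert that the conditions on $T$ ``translate \ldots\ into constraints on $V^{\bullet}(a_{*}\omega_{\tilde X})$ that force the quotient $A\to A/T$ to be built, in a bounded fashion, out of the $A\to A/B_{j}$.'' You never say what this translation is, and you close by admitting you ``expect this last translation to be the main obstacle.'' That is the gap: the whole content of the lemma lies in this translation, and you have not supplied it.

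The paper's argument is both shorter and more direct, and in particular it shows that your two reductions are unnecessary. The missing ingredient in your sketch is the theorem of Ein--Lazarsfeld: if $Y$ is smooth projective and its Albanese image is not covered by tori, then $\chi(K_{Y})>0$. Apply this to a resolution $Y$ of $p_{T}(X)$ (with $g\colon\tilde X\to Y$ a lift of $\tilde X\dashrightarrow Y$). Since $\chi(K_{Y}\otimes\beta)=\chi(K_{Y})>0$ and $h^{i}(K_{Y}\otimes\beta)=0$ for $i>0$ and generic $\beta$, upper semicontinuity gives $h^{0}(K_{Y}\otimes\beta)>0$, i.e.\ $h^{\dim Y}(Y,\beta)>0$, for \emph{every} $\beta\in\operatorname{Pic}^{0}(Y)$. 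Pulling back via $g$ (and using the edge map in the Leray spectral sequence) yields $h^{\dim Y}(\tilde X,g^{*}\beta)>0$ for every $\beta$. Hence $g^{*}\operatorname{Pic}^{0}(Y)\subset S^{\dim Y}(\tilde X)$ with $\dim Y<\dim X$; being irreducible and containing $0$, it lies in a single irreducible component $W$ of $\bigcup_{k<\dim X}S^{k}(\tilde X)$, which is therefore an abelian subvariety of $\widehat A$. Dualizing the chain $\widehat{A/T}\to\operatorname{Pic}^{0}(Y)\to W\hookrightarrow\widehat A$ shows that $A\to A/T$ factors through $A\to W^{*}$, so $T\supseteq\ker(A\to W^{*})$. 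Taking $S_{X}$ to be the finite set of these kernels finishes the proof for \emph{every} bad $T$ at once—no minimality, no fibre analysis. Your heavier machinery (Chen--Jiang, Hacon) is aimed at the same target, the finiteness of components of the support loci, but you never isolate the step that makes the link, namely that ``not covered by tori'' forces the \emph{entire} $\operatorname{Pic}^{0}(Y)$ into a single support-locus component.
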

Note that under the above assumptions $p_T(X)$ is positive-dimensional, else it would be covered by tori.

\begin{proof}[Proof of Lem. \ref{fin}] The result is a well known consequence of Generic Vanishing theory. We include it for completeness. We easily reduce to the case
$A=\text{Alb}(\tilde X)$, where $\tilde X$ smooth, and $X$ is the image of $\tilde X$ under the albanese map $\text{alb}_{\tilde X}: \tilde X\to A$, which is birational on its image. Let $T$ be a subtorus of $A$ such that $p_{T|X}$  is not generically finite on its image and $p_T(X)$ is not covered by tori. Changing the desingularization $\tilde X\to X$ if needed, denote by $Y\to p_T(X)$ a desingularization of $p_T(X)$ such that the rational map $\tilde X\dasharrow Y$ extends to a morphism $g: \tilde X\to Y$. The quotient $A\to A/T$ factorizes via $\text{Alb}(Y)$. Note that $ g^*\text{Pic}^0( Y)\subset S^{\dim  Y}(\tilde X)$, where
$$S^k(\tilde X):=\{\alpha\in \text{Pic}^0(\tilde X)|h^k(\tilde X,\alpha)>0\}$$ are the cohomological support loci (see \cite{GL}). To see this, observe that by hypothesis $p_T(X)$ is not covered by tori, hence the same holds for the albanese image of $Y$. In particular, $\chi(K_{Y})>0$ by Th. 3 in \cite{EL}. Therefore, by generic vanishing, for any $\beta\in \text{Pic}^0(Y)$ we have $h^{\dim Y}(Y,\beta)>0$ and thus $h^{\dim Y}(\tilde X, g^*\beta)>0$. Hence  $ g^*\text{Pic}^0(Y)$ is contained in an irreducible component $W$ of some $S^k(\tilde X)$, with  ${k<\dim X}$.  Since all the irreducible components of $S^k(\tilde X)$ are translates of subtori, $W$ is an abelian variety. Dualizing we get the factorization 
$$p_T:A\rightarrow W^*\rightarrow \text{Alb}(Y)\rightarrow A/T.$$The lemma then follows by the observation that the number of irreducible components $W$ of $\bigcup_{k<\dim X}S^k(\tilde X)$ is finite.
\end{proof}

Let $B$ be an abelian variety and $r,l,k$ be positive integers with $r\geq 2$ and $1\leq l\leq r-1$. Given $M\in M_{r\times l}(\mathbb{Z})$ we denote by $i_M: B^l\to B^r$ the morphism given by $(b_1,\ldots, b_l)\mapsto M(b_1,\ldots, b_l)^t$ and by $p_M: [B^r]^k\to [B^r/i_M(B^l)]^k$ the quotient map. We use the previous lemma to deduce the following:

\begin{PLEM}\label{projlem}
Let $X\subset [B^r]^k$ be a subvariety which is not covered by tori. A generic $M\in M_{r,l}(\mathbb{Z})$ is such that $p_{M}(X)$ is covered by tori or $p_{M|X}: X\to [B^r/i_M(B^l)]^k$ is generically finite on its image.
\end{PLEM}
\begin{proof}
By Lemma \ref{fin} there is a finite number of abelian subvarieties $F_a\subset [B^r]^k$ such that if $p_{M}(X)$ is not covered by tori and $p_{M|X}$ is not generically finite on its image then $[i_M(B)]^k$ contains some $F_a$. It is then enough to show that for the generic $M\in M_{r,l}(\Z)$, the abelian subvariety $[i_M(B^{l})]^k\subset [B^r]^k$ does not contain any such subvariety $F_a$. This is almost obvious: $T_0B\cong H_1(B,\mathbb{Z})\otimes \mathbb{R}$ and a vector $\mathbf{w}^a\in T_0F_a$ is identified with $(\underline{w}^a_1,\ldots,\underline{w}^a_k)$
where $\underline{w}^a_i\in H_1(B,\mathbb{Z})^r\otimes \mathbb{R}$. For each $a$, choose a non-zero vector $\mathbf{w}^a$ such that its components are in $H_1(B,\mathbb{Z})^r\otimes \mathbb{Z}$ and a non-zero component $\underline{w}_{i_a}^a\in H_1(B,\mathbb{Z})^r$ of $\mathbf{w}^a$. Choosing an isomorphism $H_1(B,\mathbb{Z})\cong \mathbb{Z}^{2\dim B}$, we get an isomorphism $H_1(B,\mathbb{Z})^r\cong [\mathbb{Z}^r]^{2\dim B}$ and, again, for each $a$, choose a non-zero component $\underline{u}_a\in \mathbb{Z}^r$ of $\underline{w}_{i_a}^a\in H_1(B,\mathbb{Z})^r$. It is enough to take $M\in M_{r,l}(\Z)$ such that $\underline{u}_a$ is not in the space generated by the columns of $M$ for any $a$. The set of such $M$ is Zariski open in $M_{r,l}(\Z)$ and is not all of $M_{r,l}(\Z)$.
\end{proof}

Finally we prove Claim \ref{claim}:

\begin{proof}[Proof of Claim \ref{claim}]

First observe that by Lemma 3.1 of \cite{M}, up to passing to a Zariski open in $S$, we can assume that $\Gamma_s$ is never covered by tori. 
We first reduce to the case where $\dim A=g$ is even. It is in the course of this reduction that we will use the hypothesis that $1\leq d\leq \dim A$ if $g$ is odd. Suppose that $g$ is odd, we will proceed exactly as in the proof of Th. 5.2 of \cite{M} to show that we can find a locus $S_\lambda(E)\subset S$ along which $\mathcal{A}_s$ is isogenous to $B_s\times E$, where $E$ is a fixed elliptic curve, and such that the restriction of the projection of $\Gamma_s$ to $B_s^k$ is generically finite on its image for generic $s\in S_\lambda(E)$. Then the image of $\Gamma_{S_\lambda(E)}$ under this projection will be a family of constant cycle subvarieties $\Gamma'/S'$ in a locally complete family of abelian $(g-1)$-folds $\mathcal{A}'/S'$. We are reduced in this way to the case of even-dimensional abelian varieties.

Consider $s\in S$ such that $\mathcal{A}_s$ is isogenous to $E^g$ for some elliptic curve $E$. We contend that there is a choice of $M\in M_{g,1}(\mathbb{Z})=\mathbb{Z}^g$ such that the composition 
$$\Gamma_s\to \mathcal{A}_s^k\to [E^g]^k\to [E^g/i_M(E)]^k$$
of the inclusion, the isogeny, and the projection is generically finite on its image. This is obvious if $d<\dim A$ so it suffices to show it for $d=\dim A$.

If this were not the case, it is easy to see (see Lemma 5.4 of \cite{M}) that, for any $y$ in the smooth locus of $\Gamma_s$, the tangent space to $\Gamma_s$ at $y$ would be of the form $M(T_0\mathcal{A}_s)$, for some $M\in \mathbb{C}^k$. Here, given $M=(m_1,\ldots, m_k)\in \Co^k$, we abuse notation and write $M$ for the map $T_0\mathcal{A}_s\to T_0\mathcal{A}_s^k=(T_0\mathcal{A}_s)^k$ given by $v \mapsto (m_1v,\ldots, m_kv)$. But, given $\omega\in H^0(\mathcal{A}_s,\Omega_{\mathcal{A}_s}^q)$, the form $\omega_k:=\sum_{i=1}^k \text{pr}_1^*\omega$ restricts to zero on $\Gamma_s$ since it is a constant cycles subvariety. It follows that the image of the Gauss map of $\Gamma_s$ does not only lie in the subvariety of $g$-dimensional subspaces of $T_0\mathcal{A}_s^k$ of the form $M(T_0\mathcal{A}_s)$ for $M\in \Co^k$, but in the subvariety of such subspaces where $M=(m_1,\ldots, m_k)$ satisfies $\sum_{i=1}^k m_i^q=0$, for $i=1,\ldots, q$. This imposes $q-1$ independent conditions on $M\in\Co^k$ and so the image of the Gauss map of $\Gamma_s$ must be at most $k-(q-1)$-dimensional. As long as $k\leq 2g-2$ this show that the Gauss map of $\Gamma_s$ is not generically finite. Since $\Gamma_s$ is not covered by tori, this contradicts well-known results about non-degeneracy of the Gauss map of subvarieties of abelian varieties (see (4.14) in \cite{GH}).

In the case of abelian varieties of even dimension $g=2n$, consider the locus $Y\subset S$ of abelian varieties isogenous to $D^{n}$, for some simple abelian surface $D$. Since $Y$ is dense in $S$, to show that $(*)$ is satisfied it suffices to show that $Y\subset R_{fg}$. For every $y\in Y$, we can fix an isogeny $\mathcal{A}_y\sim D^{n}$. Then, for any $M\in M_{n,n-1}(\Z)$ of maximal rank, we get a projection $p_{M}:\mathcal{A}_y\to D^k$ obtained by composing the fixed isogeny with the projection map $[D^{n}]^k\to [D^{n}/i_M(D^{n-1})]^k$. The maps $p_M$ are specializations of maps $p_{\tau}$ to $y\in Y$. But for any $y\in Y\cap S_{\tau}$ the variety $p_\tau(\Gamma_y)$ is a constant cycle subvariety, hence is not covered by tori. Indeed, up to an isogeny, $p_\tau(\Gamma_y)$ is a subvariety of $D^k$, and if $p_\tau(\Gamma_y)$ were covered by tori it would be covered by tori of the form $i_M(D)$, for some $M\in \Z^k$. This, together with the fact that $p_\tau(\Gamma_y)$ must be totally isotropic for the $2$-form $\omega_k:=\sum_{i=1}^k \text{pr}_i^*\omega$, for any $\omega\in H^0(D,K_D)$, would again contradict Lemma 3.1 of \cite{M}. We are thus in a position to apply the Projection Lemma \ref{projlem} to see that for any such $y$ there is an $S_\tau$ containing $y$ such that $p_\tau|_{\Gamma_y}$ is generically finite on its image. It follows that $Y\subset R_{gf}$.
\end{proof}


\begin{thebibliography} {99}

\bibitem{AP} A. Alzati, G.P. Pirola, Rational orbits on three-symmetric products of abelian varieties, Transactions of the AMS, 337 (1993), 965-980.

\bibitem{AP2} A. Alzati, G.P. Pirola, On the holomorphic length of a complex projective variety , Archiv der Mathematik, 59, 4, (1992), 398?402.

\bibitem{BDPELU} F. Bastianelli, P. De Poi, L. Ein, R. Lazarsfeld, B.
Ullery, Measures of irrationality for hypersurfaces of large degree, Compos. Math. 153 (2017), 2368-2393.

\bibitem{So} I. Berstein, A. L. Edmonds, The degree and branch set of a branched covering, Invent. Math. 45 (1978), 213-220.

\bibitem{C} N. Chen, Degree of irrationality of very general abelian surfaces, arXiv:1902.05645.


\bibitem{EL} L. Ein, R. Lazarsfeld, Singularities of theta divisors and the birational geometry of irregular varieties, J. Amer. Math. Soc. 10 (1997), 243-251.

\bibitem{GL} M. Green, R. Lazarsfeld, Higher obstructions to deforming cohomology groups of line bundles, J. Amer. Math. Soc. 4 (1991), 87-103.

\bibitem{GH} P. Griffiths, J. Harris, Algebraic geometry and local differential geometry, Ann. Sci. \'Ec. Norm. Sup\'er. 12 (3) (1979) 355–452.

\bibitem{M} O. Martin, On a conjecture of Voisin on the gonality of very general abelian varieties, arXiv:1902.01311.

\bibitem{MH} T. T. Moh, W. Heinzer, On the L\"uroth semigroups and Weierstrass canonical divisors,
J. Algebra 77 (1982), 62–73.

\bibitem{Mu} D. Mumford, Rational equivalence of zero-cycles on surfaces, J. Math. Kyoto Univ. 9 (1969), 195-204.

\bibitem{P} G.P. Pirola, Curves on generic Kummer varieties, Duke Math. J. 59 (1989), 73-80.

\bibitem{R2} A. A. Ro\u{\i}tman, Rational equivalence of zero-cycles (in Russian), Mat. Sb. 89, 131 (1972), 569-585.


\bibitem{TY} H. Tokunaga, H. Yoshihara, Degree of irrationality of abelian surfaces, J. Algebra 174 (1995), 1111-1121.

\bibitem{V} C. Voisin, Chow rings and gonality of general abelian varieties,  Annales Henri Lebesgue 1 (2018), 313-332.


\bibitem{V2} C. Voisin, Remarks and questions on coisotropic subvarieties and 0-cycles of hyper-Kähler varieties, in C. Faber, G. Farkas , G. van der Geer (eds), K3 Surfaces and Their Moduli. Progress in Mathematics, vol 315. Birkh\"auser.


\bibitem{Y} H. Yoshihara, Degree of irrationality of a product of two elliptic curves, Proc. Am. Math. Soc. 124 (1996), 1371-1375.

\end{thebibliography}
\end{document}